\theoremstyle{plain}
\newtheorem{theorem}{Theorem}[section]
\newtheorem{prepos}[theorem]{Proposition}
\theoremstyle{definition}
\renewcommand{\Im}{\operatorname{Im}}
\renewcommand{\Re}{\operatorname{Re}}
\begin{document}

\author[O.~Katkova]{Olga Katkova}

\address{Department of Science and Mathematics, Wheelock College, USA}
\email{olga.m.katkova@gmail.com }

\author[M.~Tyaglov]{Mikhail Tyaglov}

\address{School of Mathematical Sciences, Shanghai Jiao Tong University\\
and  Faculty of Mathematics, Far East Federal University}
\email{tyaglov@sjtu.edu.cn}

\author[A.~Vishnyakova]{Anna Vishnyakova}

\address{Department of Mechanics \& Mathematics, Kharkov National University}
\email{anna.m.vishnyakova@univer.kharkov.ua}

\title[Linear difference  operators]
{Linear finite difference operators preserving Laguerre-P\'olya class}

\keywords {Laguerre-P\'olya class; finite difference operators; Hermite-Biehler
class; distribution of zeros}

\subjclass{30C15; 30D15; 30D35; 26C10; 16C10}



\begin{abstract}
We completely describe all finite difference operators of the form
$$
\Delta_{M_1, M_2, h}(f)(z)=M_1(z) f(z+h) + M_2(z) f(z-h)
$$
preserving the Laguerre-P\'olya class of entire functions.
Here $M_1$ and $M_2$ are some complex functions and $h$ is a nonzero
complex number.
\end{abstract}

\maketitle


\setcounter{equation}{0}

\section{Introduction}\label{Section:intro}

In the theory of distribution of zeros of polynomials and transcendental entire functions, one of important problems
is to describe linear transformations preserving the class of real polynomials with real zeros. Hermite
and later Laguerre were, probably, the first who started to study such a problem systematically.
In 1914 P\'olya and Schur~\cite{polsch} completely described the operators acting diagonally on the standard monomial
basis $1$, $x$, $x^2$, \dots of $\mathbb{R}[x]$ and possessing the mentioned preservation property. Later the study of linear
transformations sending real-rooted polynomials to real-rooted polynomials was continued by many authors including
N.\,Obreschkov, S.\,Karlin, B.\,Levin, G.\,Csordas, T.\,Craven, K.\,de Boor, R.\,Varga, A.\,Iserles, S.\,N{\o}rsett, E.\,Saff etc.
Among recent authors it is worth to especially mention P. Br\"{a}nd\'{e}n and J. Borcea~\cite{BrandenBorcea} (see also~\cite{BrandenBorcea2,BrandenBorcea3}) who completely
characterised all linear operators preserving real-rootedness of real polynomials (and some other root location preservers).
Recently P.\,Br\"{a}nd\'{e}n, I.\,Krasikov and B.\,Shapiro~\cite{BKS} made an attempt to transfer the existing
theory of real-rootedness preservers to the basis of Pochhammer symbols and to develop a finite difference
analogue of the P\'olya-Schur theory.

The present work relates to the work~\cite{BKS} and continues it in a very specific way.
Our object of study is the central finite difference operator with
non-constant coefficients
\begin{equation}\label{e1}
\Delta_{M_1, M_2, h}(f)(z) =M_1(z) f(z+h) + M_2(z) f(z-h).
\end{equation}
Here $M_1$ and $M_2$ are certain functions and $h$ is a non-zero \textit{complex} number.
In this work we study operators of the form~\eqref{e1} that preserve real-rootedness
of polynomials. It is clear that the operator $\Delta_{M_1, M_2, h}(p)(z)$ sends an arbitrary polynomial to
a polynomial if, and only if, $M_1$ and $M_2$ are polynomials. The operator~\eqref{e1} and its multiple superpositions were also studied
in~\cite{BKS} in the case when $M_1$ and $M_2$ are polynomials and $h\in\mathbb{R}\setminus\{0\}$. The authors
of~\cite{BKS} established that the operator
$$
T(f)(x) = \sum\limits_{j=0}^k q_j(x) f(x-j)
$$
with $q_j\in\mathbb{C}[x]$, $j=0,1,\ldots,k$, preserves the class of real-rooted polynomials
if and only if  $q_j(x) \not\equiv 0$  for at most one $j$, and $q_j$ has all
real zeros for such an $j$. Consequently, the case of real step $h$ cannot
provide a non-trivial operators of the form~\eqref{e1} preserving real-rootedness of polynomials.
Here we completely describe all such non-trivial operators establishing the following fact.
\begin{prepos}\label{Proposition1}
Let $M_1$ and $M_2$ be polynomials not identically zero. Then for every polynomial $p\in\mathbb{R}[x]$ with only real zeros, the polynomial
$$
M_1(z) p(z+h) + M_2(z) p(z-h)
$$
has only real zeros if, and only if, $\Re h=0$, and either $\left|\dfrac{M_1(z)}{M_2(z)}\right|\equiv1$, or $M_1(z)=e^{i\theta}\cdot\overline{M_2(\bar{z})}$, $\theta\in[0,2\pi)$,
and all the zeros of the polynomial $M_2$ lie in the half-plane $\Im h\cdot\Im z\geqslant0$.
\end{prepos}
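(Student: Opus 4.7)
The plan is to establish both directions through the algebraic identity
\[
\Delta_{M_1,M_2,h}(p)(z)\;=\;g(z)+e^{i\theta}g^{*}(z),\qquad g(z):=M_2(z)\,p(z-h),\quad g^{*}(z):=\overline{g(\bar z)},
\]
which holds whenever $\Re h=0$ and $M_1(z)=e^{i\theta}\overline{M_2(\bar z)}$. Its verification uses only $\bar h=-h$ together with the fact that $p$ has real coefficients, via $\overline{p(\bar z-h)}=p(z+h)$. Once this identity is available, the zeros of $\Delta_{M_1,M_2,h}(p)$ are exactly the solutions of $g/g^{*}=-e^{i\theta}$, an equation of modulus $1$. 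By a classical Hermite--Biehler-type estimate, if all zeros of $g$ lie in a single closed half-plane bounded by $\mathbb{R}$, then $|g(z)/g^{*}(z)|$ is strictly less than (resp.\ greater than) $1$ in the other open half-plane, so this equation can have only real solutions.

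For sufficiency, assume $\Re h=0$, $M_1=e^{i\theta}\overline{M_2(\bar z)}$, and that the zeros of $M_2$ lie in the closed half-plane $\{z:\Im z\cdot\Im h\ge 0\}$. The zeros of $g=M_2\cdot p(z-h)$ are of two types: zeros of $M_2$, already in the required half-plane by hypothesis; and the translates $\alpha+h$ of the real roots $\alpha$ of $p$, whose imaginary parts all equal $\Im h$. Thus all zeros of $g$ lie in one closed half-plane, and the Hermite--Biehler estimate applies to yield real-rootedness of $\Delta_{M_1,M_2,h}(p)$. The alternative case $|M_1/M_2|\equiv 1$ is precisely the subcase in which $M_2$ is itself real-rooted: then $M_2^{*}$ is a unimodular multiple of $M_2$, and $M_1=e^{i\theta}M_2^{*}$ collapses to $M_1=cM_2$ with $|c|=1$, the half-plane condition being automatic.

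For necessity, I plan to extract each ingredient by testing on a carefully chosen family. Apply $\Delta_{M_1,M_2,h}$ to $p_n(z)=(z-a)^n$ with $a\in\mathbb{R}$ and $n$ large, and analyse the zeros clustering near $z=a$. Under the rescaling $z=a+hu/n$, a straightforward asymptotic calculation gives
\[
\Delta(p_n)\bigl(a+hu/n\bigr)\;\sim\;h^n\bigl[M_1(a)e^{u}+(-1)^{n}M_2(a)e^{-u}\bigr],
\]
so the zeros correspond to $u_k=\tfrac12\log\!\bigl((-1)^{n+1}M_2(a)/M_1(a)\bigr)+i\pi k$, $k\in\mathbb{Z}$. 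The requirement that $a+hu_k/n\in\mathbb{R}$ for every integer $k$ and every $a\in\mathbb{R}$ simultaneously forces $\Re h=0$ (from the $k$-spacing $i\pi h/n$ being real) and $|M_1(a)|=|M_2(a)|$ on $\mathbb{R}$ (from the imaginary-part equation at fixed $k$), hence $M_1M_1^{*}\equiv M_2M_2^{*}$ as polynomials. Testing with $p\equiv 1$, the value $\Delta(1)(z)=M_1(z)+M_2(z)$ vanishes at every common zero of $M_1$ and $M_2$; preservation therefore forbids common non-real zeros, and combined with $M_1M_1^{*}=M_2M_2^{*}$ this forces the non-real zeros of $M_1$ to be exactly the conjugates of those of $M_2$, yielding $M_1=e^{i\theta}M_2^{*}$. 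Finally, the half-plane condition on the zeros of $M_2$ follows by noting that if $M_2$ had zeros in both open half-planes then so would $g=M_2\cdot p(z-h)$, producing a non-real curve on which $|g/g^{*}|=1$ and hence non-real zeros of $\Delta(p)$ for suitable real-rooted $p$; the correct sign $\Im h\cdot\Im z\ge 0$ is dictated by the imaginary part $\Im h$ of the remaining zeros $\alpha+h$ of $p(z-h)$.

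I expect the main technical obstacle to be the asymptotic analysis: making the rescaled heuristic rigorous uniformly in $k$ and controlling the error at scale $1/n^2$, then extracting the two sharp equalities $\Re h=0$ and $|M_1|\equiv|M_2|$ from the resulting family of real-rootedness conditions as $a$ and $k$ vary. Once these are secured, the subsequent algebraic steps (pinning down the rigid form $M_1=e^{i\theta}M_2^{*}$ and localising the zeros of $M_2$ in the correct closed half-plane) follow comparatively easily from the identity $\Delta(p)=g+e^{i\theta}g^{*}$ and the Hermite--Biehler estimate.
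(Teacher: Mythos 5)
Your overall framework (the decomposition $\Delta(p)=g+e^{i\theta}g^{*}$ with $g=M_2(z)p(z-h)$ and the Hermite--Biehler modulus estimate) coincides with how the paper handles the sufficiency direction in the introduction. For necessity, however, you take a genuinely different route: the paper does not prove Proposition~\ref{Proposition1} directly but derives it from the general entire-function result (Theorem~\ref{th:mth2}, via Theorems~\ref{th:mst1}, \ref{th:mth1} and \ref{th:mth3}), whose key test functions are the exponentials $e^{-az^{2}+bz}$; you instead stay inside $\mathbb{R}[x]$ and test on $(z-a)^{n}$ with the rescaling $z=a+hu/n$. That part of your plan is sound and can be made rigorous with Hurwitz's theorem: the rescaled functions converge locally uniformly to $M_1(a)e^{u}+(-1)^{n}M_2(a)e^{-u}$, its zeros are approximated by genuine (hence real) zeros of $\Delta(p_n)$, the $i\pi$-spacing forces $\Re h=0$, and $\Re u_k=0$ forces $|M_1(a)|=|M_2(a)|$ for all real $a$, whence $M_1M_1^{*}=M_2M_2^{*}$; combined with the exclusion of common non-real zeros via $\Delta(1)=M_1+M_2$ (with the degenerate case $M_1+M_2\equiv 0$ falling into the $|M_1/M_2|\equiv 1$ alternative), this does yield $M_1=e^{i\theta}M_2^{*}$.

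The genuine gap is the last step of the necessity argument: the half-plane condition on the zeros of $M_2$. Your stated mechanism --- ``a non-real curve on which $|g/g^{*}|=1$ and hence non-real zeros of $\Delta(p)$'' --- does not work as written, because $|g/g^{*}|=1$ on a curve does not produce a solution of the exact equation $g/g^{*}=-e^{i\theta}$ off the real axis. Indeed, for a fixed polynomial the implication ``zeros in both open half-planes $\Rightarrow$ $g+e^{i\theta}g^{*}$ has a non-real zero'' is false: take $\theta=0$ and $g(z)=z+i(z^{2}+1)$, whose zeros are $i(1\pm\sqrt{5})/2$ (one in each open half-plane), yet $g+g^{*}=2z$ is real-rooted. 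So you must genuinely exploit the freedom in choosing $p$, and your proposal supplies no construction of a real-rooted $p$ that detects a zero of $M_2$ lying in the wrong half-plane. The paper closes exactly this point in Theorem~\ref{th:mth1} by testing against $e^{-az^{2}+bz}$, with $a,b$ tuned so that $f(z_0+i)/f(z_0-i)$ hits the prescribed value $-M(z_0)$ at the offending point; to adapt that to your polynomial-only setting you would need, e.g., to approximate these exponentials by real-rooted polynomials (products of $(1+bz/n)^{n}$ and $(1-az^{2}/n)^{n}$) and pass to the locally uniform limit, or to find a direct polynomial substitute. Until that step is supplied, the necessity of ``all zeros of $M_2$ in $\Im h\cdot\Im z\geqslant 0$'' is not proved. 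A smaller point: your reading of the $|M_1/M_2|\equiv 1$ alternative as ``the subcase where $M_2$ is real-rooted'' is a reinterpretation rather than what the statement literally says (there $M_1=cM_2$ with $|c|=1$, and sufficiency genuinely requires $M_2$ to be real-rooted, a condition the proposition's first alternative omits but which the paper's Theorem~\ref{th:mth3} restores via its condition 3)).
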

Recall that polynomials with all zeros in the closed upper (or lower) half-plane are called \textit{quasi-Hermite-Biehler} polynomials
in analogue with quasi-stable polynomials whose zeros lie on the closed left-half plane. One direction of Proposition~\ref{Proposition1}
is easy. Indeed, the case $\Re h=0$, $|M_1(z)/M_2(z)|\equiv1$ was established in~\cite{pw}, but if
$\Re h=0$, $M_1(z)=e^{i\theta}\overline{M_2(\bar{z})}$, and all zeros of $M_2$ lie in the half-plane $\Im h\cdot\Im z\geqslant0$, then
$$
e^{-i \frac{\theta } {2}} (M_1(z) p(z+h) + M_2(z) p(z-h) =
2 \sum_{k=0}^\infty c_k z^k
$$
whenever
$$
 e^{-i \frac{\theta } {2}} M_2(z) p(z-h) = \sum_{k=0}^\infty
(c_k + i d_k) z^k.
$$
Since $e^{-i \frac{\theta } {2}}M_2(z) p(z-h)$ has all zeros in the upper half-plane ($\Im h>0$) or in the lower half-plane ($\Im h<0$), the polynomial $\sum\limits_{k=0}^\infty c_k z^k$ has only real zeros by Hermite-Biehler
theorem~\cite{her,bieh} (see also~\cite{KreinNaim} or~\cite[Chapter VII]{lev}).

The proof of the converse statement is more difficult, but we prove it here as a particular case of a more general
statement. In the present work, we extend Proposition~\ref{Proposition1} to  entire functions.
Note that some special transcendental functions appear to be solutions of certain finite difference operators of a form
similar to~\eqref{e1}. Riemann's functional equation for the Riemann $\zeta$-function~\cite{Riemann} is such an example.
Some special functions can be results of the action of operators of the form~\eqref{e1}. Thus, it is worth to
extend the domain of the operator~\eqref{e1} from polynomials to entire functions. A natural extension of
polynomials with real roots is the so-called Laguerre-P\'olya class.

A real entire function $f$ is said to be in the {\it
Laguerre-P\'olya class}, $f \in \mathcal{L-P}$, if
\begin{equation}\label{e2}
 f(z) = c z^n e^{- a z^2+b z}\prod_{k=1}^\infty
\left(1-\frac {z}{x_k} \right)e^{\tfrac {z}{x_k}},
\end{equation}
where $c, b, x_k \in  \mathbb{R}$, $x_k\ne 0$,  $a \geqslant 0$,
$n$ is a nonnegative integer and $\sum\limits_{k=1}^\infty x_k^{-2} <
\infty$. The product in the right-hand side of~\eqref{e2} can be
finite or empty (in the latter case the product equals 1).

This class is essential in the theory of entire functions due to the fact that these
and only these functions are the uniform limits, on compact subsets of
$ \mathbb{C}$, of polynomials with only real zeros. For various
characterizations of the
Laguerre-P\'olya class see e.g. \cite[p. 100]{pol}, \cite{polsch}  or  \cite[Kapitel II]{O}.
Thus, in this work we study operators of form~\eqref{e1} that preserve the Laguerre-P\'olya
class, that is, $\Delta_{M_1, M_2, h}(\mathcal{L-P})\subset\mathcal{L-P}$.

G.\,P\'olya seems to be the first who obtained some results
in $\mathcal{L-P}$-preserving properties of an operator of the form~\eqref{e1}.
In~\cite{pol1} he established that if
$f \in \mathcal{L-P}$ then
$$
f(x+ic) + f(x-ic)\in \mathcal{L-P}
$$
for every
$c \in \mathbb{R}$. In \cite{pw} it was observed that the same fact is valid
for the transformation
$$
f(x+ic) + \theta f(x-ic), \quad |\theta| = 1.
$$

In this work, we completely describe all the operators of the form~\eqref{e1} that preserve the Laguerre-P\'olya class.
The main result of the work is the following theorem.
\begin{theorem}\label{th:mth2}
Let $M_1$ and $M_2$ be non-zero entire functions. Then
operator~\eqref{e1} preserves the class~$\mathcal{L-P}$ if, and only if, $\Re h=0$ and the functions $M_1$ and $M_2$ satisfy the conditions:
\begin{itemize}
\item[1.] $ M_1(z)=\overline {M_2(\bar{z})} , \   z \in \mathbb{C}$.
\item[2.] The function $M_2$  is of the form
\begin{equation}\label{t4}
M_2(z)=C z^{n} e^{-az^2+bz}\prod_{k=1}^{\infty}
\left(1-\dfrac{z}{\alpha_k}\right)e^{\tfrac{z}{\alpha_k}}\prod_{k=1}^{\infty} \left(1-\dfrac{z}{x_k}\right)e^{\tfrac{z}{x_k}},
\end{equation}
where $\Im\alpha_k \ >0$, $x_k \in \mathbb{R}\setminus \{0\}$, $\sum\limits_{k=1}^{\infty}\left(|\alpha_k|^{-2} + x_k^{-2}\right)<\infty$, $a \geqslant 0$, $C, \ b \in \mathbb{C}$,
$n\in\mathbb{N}\cup\{0\}$,
and
\begin{equation}\label{q1}
\Im b \geqslant 0\quad\text{whenever} \quad  \sum\limits_{k=1}^{\infty} \frac{1}{|\alpha_k|}<\infty,
\end{equation}
or
\begin{equation} \label{q2}
\left(\Im
b -\sum_{k=1}^{\infty} \frac{\Im\alpha_k}{|\alpha_k|^2}\right) \geqslant 0
\quad\text{whenever}\quad \sum\limits_{k=1}^{\infty}\frac{1}{|\alpha_k|}=\infty.
\end{equation}
\end{itemize}
\end{theorem}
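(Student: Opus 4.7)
\textit{Plan.} \emph{Sufficiency.} Taking $h=i\eta$ with $\eta>0$ (the case $\eta<0$ is symmetric), Condition~1 yields $\Delta_{M_1,M_2,h}(f)(z)=F(z)+\overline{F(\bar z)}$ where $F(z):=M_2(z)f(z-h)$, using that $f$ is real-entire and $\bar h=-h$. Formula~\eqref{t4} and \eqref{q1}/\eqref{q2} force $F$ into the (quasi-)Hermite-Biehler class for the upper half-plane: the real zeros $x_j$ of $M_2$, the non-real zeros $\alpha_k$ (with $\Im\alpha_k>0$), and the shifted zeros of $f$ (of imaginary part $\eta$) all lie in $\{\Im z\geqslant 0\}$, while the exponential factor $e^{-az^2+bz}$ has the correct tilt; the correction $\sum\Im\alpha_k/|\alpha_k|^2$ in~\eqref{q2} compensates for the mean argument contributed by the Weierstrass convergence factors when $\sum|\alpha_k|^{-1}=\infty$. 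The Hermite-Biehler theorem then gives $\Delta(f)=2\,\Re F\in\mathcal{L-P}$.

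\emph{Necessity, Stage 1 (reality and Condition~1).} I would apply $\Delta$ to the test family $f_\lambda(z)=e^{\lambda z}\in\mathcal{L-P}$, $\lambda\in\mathbb R$. Since $\Delta(f_\lambda)(x)$ is real for $x\in\mathbb R$,
$$M_1(x)e^{\lambda h}+M_2(x)e^{-\lambda h}=\overline{M_1(x)}\,e^{\lambda\bar h}+\overline{M_2(x)}\,e^{-\lambda\bar h}\qquad(x,\lambda\in\mathbb R).$$
Linear independence in $\lambda$ of the exponentials with exponents $\{\pm h,\pm\bar h\}$ splits into three cases. If $\Re h\ne 0$ and $\Im h\ne 0$, the four exponents are distinct and force $M_1\equiv M_2\equiv 0$. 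If $h\in\mathbb R\setminus\{0\}$, the identity forces $M_1,M_2$ to be real-entire, and this case is excluded by reducing to Proposition~\ref{Proposition1} via an approximation of $M_1,M_2$ by real-rooted polynomials. If $h\in i\mathbb R\setminus\{0\}$, then $\bar h=-h$ and the identity collapses to $(M_1(x)-\overline{M_2(x)})e^{\lambda h}+(M_2(x)-\overline{M_1(x)})e^{-\lambda h}\equiv 0$, giving $M_1(z)=\overline{M_2(\bar z)}$ on $\mathbb C$.

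\emph{Necessity, Stage 2 (structure of $M_2$).} With Condition~1 and $h=i\eta$, $\eta>0$, I compute $\Delta(e^{\lambda z})(z)=e^{\lambda z}\Phi_{\lambda\eta}(z)$, where $\Phi_\theta(z):=e^{-i\theta}M_2(z)+e^{i\theta}\overline{M_2(\bar z)}$. Membership of $\Delta(e^{\lambda z})$ in $\mathcal{L-P}$ for every $\lambda\in\mathbb R$ therefore forces $\Phi_\theta\in\mathcal{L-P}$ for every $\theta\in\mathbb R$. Writing $M_2=A+iB$ with real-entire $A,B$, this is exactly the classical Hermite-Biehler characterization: $\cos\theta\cdot A+\sin\theta\cdot B$ having only real zeros for every $\theta$ is equivalent to $M_2$ belonging to the (quasi-)Hermite-Biehler class for the upper half-plane (orientation fixed by $\eta>0$). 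Hadamard factorization of $M_2$ then produces exactly \eqref{t4} with $\Im\alpha_k>0$ and $x_j\in\mathbb R$. The constraints \eqref{q1}/\eqref{q2} on $\Im b$ are obtained by requiring that the exponential factor $e^{-az^2+bz}$ together with the Weierstrass convergence factors keep $M_2$ in the upper-half-plane class; the subtraction of $\sum\Im\alpha_k/|\alpha_k|^2$ in \eqref{q2} is the renormalization required when the convergence factors induce an infinite cumulative tilt.

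\emph{Main obstacle.} The delicate step is the last part of Stage~2: precisely accounting for the contribution of each convergence factor $e^{z/\alpha_k}$ to the argument of $M_2$ in the upper half-plane, and extracting the sharp dichotomy \eqref{q1} vs.\ \eqref{q2} from the convergence of $\sum|\alpha_k|^{-1}$. A secondary hurdle is the real-$h$ elimination in Stage~1, which requires bridging Proposition~\ref{Proposition1} (stated only for polynomial $M_1,M_2$) to entire $M_1,M_2$, most naturally by a Laguerre-P\'olya approximation.
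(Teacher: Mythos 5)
Your sufficiency argument and Stage~1 follow the paper closely (the paper gets $h^2\in\mathbb{R}$ by testing on $1,x,x^2,x^3$ rather than by linear independence of $e^{\pm\lambda h},e^{\pm\lambda\bar h}$, but your route is equally valid, and both treatments of real $h$ lean on the result of Br\"and\'en--Krasikov--Shapiro / Proposition~\ref{Proposition1} without full detail). The genuine gap is in Stage~2. The condition you extract from the test family $e^{\lambda z}$, namely $\Phi_\theta=e^{-i\theta}M_2+e^{i\theta}\overline{M_2(\bar{\cdot})}\in\mathcal{L-P}$ for all $\theta\in\mathbb{R}$, is \emph{invariant under replacing $M_2$ by $\overline{M_2(\bar z)}$}, i.e.\ under swapping $M_1$ and $M_2$, which is the same as replacing $h$ by $-h$. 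Only one of these two operators preserves $\mathcal{L-P}$ (unless $|M_2/M_1|\equiv 1$), so no argument based solely on zero-free pure exponentials can fix the orientation: it cannot decide whether the $\alpha_k$ lie in the upper or the lower half-plane relative to the sign of $\eta$, nor the sign of $\Im b-\sum\Im\alpha_k/|\alpha_k|^2$, which is precisely the content of \eqref{t4}--\eqref{q2}. The paper breaks this symmetry in Theorem~\ref{th:mth1} by testing against $f(z)=e^{-az^2+bz}$ with $a>0$ and against $\bigl(z^2-\tfrac{4}{e^{4a}-1}\bigr)e^{bz}$; the Gaussian factor couples the phase $\theta$ to $z$ itself (one gets $\Phi_{\eta(\lambda-2az)}(z)$ with $a\geqslant 0$ only), and this is what forces $|M_2(z)/M_1(z)|\leqslant 1$ specifically for $\Im z>0$. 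You would need to add such test functions; "orientation fixed by $\eta>0$" is asserted, not proved.

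A second substantive omission: formula \eqref{t4} uses genus-one primary factors, i.e.\ it presupposes $\sum_k|\alpha_k|^{-2}<\infty$. Since $M_2$ is a priori only of order $\leqslant 2$ and normal type, Hadamard factorization by itself yields genus-two factors, and the classical Krein representation of $\mathcal{HB}$-functions likewise carries general Hadamard exponents $e^{RP_k(z/\alpha_k)}$. The paper's proof of this convergence is its most technical step: the Blaschke condition handles zeros with $\Im\alpha_k>\delta$, while the zeros with $0<\Im\alpha_k\leqslant\delta$ require Lindel\"of's theorem (boundedness of $\sum_{|\alpha|\leqslant R}\alpha^{-2}$) combined with the inequality $\Re(\alpha^{-2})\geqslant(|\alpha|^2-2\delta^2)/|\alpha|^4$. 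Your plan jumps from "quasi-Hermite--Biehler" to \eqref{t4} without this, and the final dichotomy \eqref{q1} versus \eqref{q2} --- which you correctly flag as the delicate point --- is exactly what the paper then reads off from Krein's representation \eqref{t5} after separating the real zeros as in \eqref{t6}. As it stands, the proposal is a correct outline of the sufficiency half and of Condition~1, but the necessity of Condition~2 is not established.
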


\vspace{2mm}

The paper is organized as follows. In Section~\ref{section:premil.res}, we present some preliminary
results such as necessary conditions for the operator~\eqref{e1} to preserve the class~$\mathcal{L-P}$
and simplified version of Theorem~\ref{th:mth2}. Section~\ref{section:proof.thm1.2} is devoted
to the proof of Theorem~\ref{th:mth2}. In Section~\ref{Section:conclusion}, we give some remarks an conclusions.

\setcounter{equation}{0}

\section{Preliminary results}\label{section:premil.res}

In this section, we prove some auxiliary facts that we use for the proof of Theorem~\ref{th:mth2}.
At first, we establish some necessary conditions for the operator~\eqref{e1}
to preserve the Laguerre-P\'olya class.

\begin{theorem}\label{th:mst1}
Let $M_1$ and $M_2$ be two given functions, $M_1, M_2 : \mathbb{C} \to \mathbb{C},$
$M_1 \not\equiv 0, M_2 \not\equiv 0,$  and $h$ be a complex number, $h\ne 0$.  Suppose
that the linear operator $\Delta_{M_1, M_2,   h}$ of the
form~\eqref{e1} has the property  $\Delta_{M_1, M_2,   h}(\mathcal{L-P})\subset \mathcal{L-P}$.  Then $M_1$
and $M_2$ are entire  functions and either $h\in\mathbb{R}$ or $i h\in\mathbb{R}$.
\end{theorem}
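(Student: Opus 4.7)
The plan is to test $\Delta_{M_1,M_2,h}$ on the simplest members of the Laguerre--P\'olya class, namely the monomials $1, z, z^2, z^3$, and to exploit two basic features of $\mathcal{L-P}$: every element is an entire function, and every element takes real values on $\mathbb{R}$.

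First, to show that $M_1, M_2$ are entire, I use that $1\in\mathcal{L-P}$ and $z\in\mathcal{L-P}$. Then
\[
\Delta_{M_1,M_2,h}(1)=M_1+M_2\qquad\text{and}\qquad \Delta_{M_1,M_2,h}(z)=(M_1+M_2)z+h(M_1-M_2)
\]
both lie in $\mathcal{L-P}$ and are therefore entire. Subtracting $z\cdot\Delta_{M_1,M_2,h}(1)$ from $\Delta_{M_1,M_2,h}(z)$ and dividing by $h\ne 0$ gives that $M_1-M_2$ is entire, so $M_1$ and $M_2$ are individually entire.

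Next, setting $A=M_1+M_2$ and $B=M_1-M_2$, I apply $\Delta_{M_1,M_2,h}$ to $z^2\in\mathcal{L-P}$ and expand:
\[
\Delta_{M_1,M_2,h}(z^2)(z)=A(z)z^2+2hB(z)z+h^2 A(z).
\]
Since $\Delta_{M_1,M_2,h}(1)$, $\Delta_{M_1,M_2,h}(z)$, $\Delta_{M_1,M_2,h}(z^2)$ are all in $\mathcal{L-P}$, they are real on $\mathbb{R}$, and I read off successively $A(x)\in\mathbb{R}$, then $hB(x)\in\mathbb{R}$, then $h^2A(x)\in\mathbb{R}$ for every $x\in\mathbb{R}$. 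Provided $A\not\equiv 0$, picking $x_0\in\mathbb{R}$ with $A(x_0)\ne 0$ forces $h^2\in\mathbb{R}$, whence $h\in\mathbb{R}$ or $ih\in\mathbb{R}$.

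The only real (and quite modest) obstacle is the degenerate case $A\equiv M_1+M_2\equiv 0$, in which the first two identities give no information about $h$. Here $\Delta_{M_1,M_2,h}(f)(z)=M_1(z)\bigl(f(z+h)-f(z-h)\bigr)$, and I would plug in $f(z)=z$ and $f(z)=z^3$ to obtain $\Delta_{M_1,M_2,h}(z)=2hM_1$ and $\Delta_{M_1,M_2,h}(z^3)=M_1(6hz^2+2h^3)$. Since $M_1\not\equiv 0$ (otherwise $M_2\equiv 0$, contradicting the hypothesis), reality on $\mathbb{R}$ of the first gives $hM_1(x)\in\mathbb{R}$; substituting this into the reality condition for the second yields $2h^2\cdot hM_1(x)\in\mathbb{R}$, so again $h^2\in\mathbb{R}$. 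Combining the two cases finishes the argument.
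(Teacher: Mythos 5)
Your proof is correct and follows essentially the same route as the paper's: test the operator on $1$, $z$, $z^2$, and, in the degenerate case $M_1+M_2\equiv 0$, on $z^3$, then use that every image is real on $\mathbb{R}$ to extract $h^2\in\mathbb{R}$. The only (harmless) divergence is the entirety step, where the paper applies the operator to the exponentials $e^{bz}$ for varying real $b$ and you instead read off $M_1\pm M_2$ from $\Delta_{M_1,M_2,h}(1)$ and $\Delta_{M_1,M_2,h}(z)$, which is slightly more economical.
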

\begin{proof}
Since for every $b\in\mathbb{R}$ the function $e^{b z}$
belongs to the Laguerre-P\'olya class, the function
$$
T_{M_1,M_2,h}(e^{b z})=M_1(z)e^{z+h}+M_2(z)e^{z-h}
$$
belongs to the same class by assumption. Hence $M_1(z) + e^{-2 bh} M_2(z)$ is an entire function in the
class~$\mathcal{L-P}$ for all real~$b$. By assumption $h\neq0$, so the functions $M_1$ and $M_2$ are
entire.

Furthermore, the functions $f_1(x) =1$ and $f_2(x) =x$ belong to the class $\mathcal{L-P}$, so
$$
M_1(x)+M_2(x) \in \mathbb{R}\quad\text{and}\quad
%
%
x(M_1(x)+M_2(x)) + h(M_1(x) - M_2(x)) \in \mathbb{R}
$$
for all $x\in \mathbb{R}$. Consequently, $ h(M_1(x) - M_2(x)) \in \mathbb{R}$  for all $ x\in \mathbb{R}.$

Additionally, note that $f_3(x) =x^2 \in \mathcal{L-P}$ so
$$
x^2(M_1(x)+M_2(x)) + 2xh(M_1(x) - M_2(x)) +
h^2(M_1(x)+M_2(x)) \in \mathbb{R}
$$
for all $ x\in \mathbb{R}$, whence $h^2(M_1(x) + M_2(x)) \in\mathbb{R}$  for all $ x\in \mathbb{R}$.
Thus, $h^2 \in \mathbb{R} $ whenever $M_1(x) + M_2(x) \not\equiv 0$.

However, if $M_1(x) + M_2(x) \equiv 0$, then applying the operator $T_{M_1,M_2,h}$ to the function
$f_4(x) =x^3 \in \mathcal{L-P}$, we obtain
$$
M_1(x)\left((x+h)^3 - (x-h)^3\right) = h M_1(x) (6x^2  + 2h^2) \in \mathbb{R},\qquad\forall x\in\mathbb{R}.
$$
Since $ h(M_1(x) - M_2(x)) =2hM_1(x) \in \mathbb{R}$,  we get $h^2 \in \mathbb{R}$, as required.
\end{proof}

Thus, to study operators of the form~\eqref{e1} preserving the class $\mathcal{L-P}$, we must
take $M_1$ and $M_2$ to be entire functions and $h\in i\mathbb{R}$ or $h\in\mathbb{R}$. As we mentioned in Section~\ref{Section:intro}, the case
of real $h$ was proved to be trivial in~\cite{BKS}, so in what follows, we consider
the operator:
\begin{equation}\label{e3}
T_{M_1, M_2} (f) (z) =M_1(z) f(z+i) + M_2(z) f(z-i)
\end{equation}
where the functions $M_1$ and $M_2$ are entire.

Every entire function in the Laguerre-P\'olya class has only real zeros or is identically zero. Thus, our first step is to
find conditions when the image $T_{M_1, M_2}(\mathcal{L-P})$ consists of entire functions with such a property.
\begin{theorem}\label{th:mth1}
Let $M$ be a meromorphic function. Then for every function $f\in \mathcal{L-P}$
the function $f(z+i)+M(z)f(z-i)$ has only real roots  or is identically zero if, and only if,  one of the
following two conditions holds:
\begin{itemize}
\item[(i)]  $|M(z)| < 1$ whenever $\Im z>0$, and $|M(z)| > 1$ whenever $\Im z <0$;\\
\item[(ii)]  $M$ is a constant function with $|M(z)| \equiv 1.$
\end{itemize}
\end{theorem}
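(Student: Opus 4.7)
The plan is to prove the biconditional via the ratio $F(z):=f(z+i)/f(z-i)$ attached to a non-constant $f\in\mathcal{L-P}$. The key auxiliary fact is that, applying~\eqref{e2} termwise, one obtains at $z=x+iy$
$$
|F(z)|=\left|\tfrac{z+i}{z-i}\right|^{n} e^{4ay}\prod_{k}\tfrac{|z+i-x_{k}|}{|z-i-x_{k}|},
$$
since every convergence factor $e^{2i/x_{k}}$ and the Weierstrass exponential $e^{2ib}$ have modulus one. For $y>0$ every factor on the right is $\geq 1$, with strict inequality unless $f$ is a nonzero constant; hence $|F(z)|>1$ on the upper half-plane and $|F(z)|<1$ on the lower for every non-constant $f\in\mathcal{L-P}$.

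For sufficiency under (i), suppose $z_{0}$ with $\Im z_{0}>0$ satisfies $f(z_{0}+i)+M(z_{0})f(z_{0}-i)=0$. If $f(z_{0}-i)\neq 0$, then $M(z_{0})=-F(z_{0})$ has modulus strictly greater than one, contradicting $|M|<1$ in the upper half-plane. If $f(z_{0}-i)=0$, then $z_{0}=x_{k}+i$ for some real zero $x_{k}$ of $f$, and $f(z_{0}+i)=f(x_{k}+2i)\neq 0$ because $f$ has only real zeros, so $z_{0}$ cannot be a zero at all. The lower half-plane is symmetric, and the case $f\equiv\mathrm{const}$ reduces to the observation that (i) precludes $M(z)=-1$ off the real axis. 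For sufficiency under (ii), writing $M\equiv e^{i\theta}$ and $g(z):=e^{i\theta/2}f(z-i)$, the fact that $f$ is real with real zeros yields $\overline{g(\bar z)}=e^{-i\theta/2}f(z+i)$, whence $f(z+i)+e^{i\theta}f(z-i)=e^{i\theta/2}\bigl(g(z)+\overline{g(\bar z)}\bigr)$. All zeros of $g$ lie on $\Im z=1$, so the Hermite--Biehler theorem (see~\cite[Chapter VII]{lev}) forces all zeros of the sum to be real.

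For necessity, I would test the operator on three explicit subfamilies of $\mathcal{L-P}$. Plugging in $f_{\lambda}(z)=e^{\lambda z}$ for $\lambda\in\mathbb{R}$ gives $e^{\lambda z}\bigl(e^{i\lambda}+M(z)e^{-i\lambda}\bigr)$, whose zeros are the $M$-preimages of $-e^{2i\lambda}$; as $\lambda$ sweeps $\mathbb{R}$ these targets trace the whole unit circle, so either $M$ is constant, or $M$ avoids the unit circle off the real axis and hence (by continuity of $|M|$ on the open half-planes minus the discrete pole set) $|M|-1$ has constant sign on each open half-plane. Plugging in $f_{a,\lambda}(z)=e^{-az^{2}+\lambda z}$ with $a\geq 0$ yields $F(z)=e^{4ay}e^{i(2\lambda-4ax)}$ at $z=x+iy$, whose modulus ranges over $[1,\infty)$ for $y>0$ and over $(0,1]$ for $y<0$ as $a$ varies, while $\lambda$ freely tunes the phase. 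Hence for any $z_{0}$ off the real axis and any target value $w$ of the appropriate modulus one can realize $w=-F(z_{0})$, and this produces a non-real zero of $f_{a,\lambda}(z+i)+M(z)f_{a,\lambda}(z-i)$ whenever $|M(z_{0})|>1$ with $\Im z_{0}>0$ or $|M(z_{0})|<1$ with $\Im z_{0}<0$. Combined with the previous step this pins the non-constant case to exactly (i). For constant $M\equiv c$, plugging $f(z)=z$ gives $z(1+c)+i(1-c)$, whose unique zero $-i(1-c)/(1+c)$ is real if and only if $|c|=1$, placing us in (ii). The principal obstacle is the construction of the family $f_{a,\lambda}$: linear-factor families $z-x_{k}$ alone trace only bounded Möbius arcs in the half-planes and cannot reach arbitrary targets of large modulus, so the two-parameter exponential-quadratic family is essentially forced, and this is the step of the argument least reducible to the classical Hermite--Biehler toolkit.
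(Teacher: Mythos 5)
Your sufficiency direction is sound and close to the paper's: the modulus computation for $F(z)=f(z+i)/f(z-i)$ is exactly the paper's inequality $|F(\lambda)|\geqslant 1$ for $\Im\lambda>0$ (though your claim that the inequality is strict ``unless $f$ is a nonzero constant'' is wrong --- $f(z)=e^{bz}$ is non-constant with $|F|\equiv 1$; fortunately only the non-strict inequality is needed against the strict hypothesis in (i)), and your Hermite--Biehler reduction for case (ii) is a legitimate alternative to the paper's direct verification. The first step of your necessity argument (sweeping $e^{\lambda z}$ to show a non-constant $M$ omits the whole unit circle off the real axis, hence $|M|-1$ has constant sign on each half-plane) is also fine.

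The genuine gap is in the second necessity step. The hypothesis is that $f(z+i)+M(z)f(z-i)$ has only real roots \emph{or is identically zero}, so exhibiting a non-real zero of $f_{a,\lambda}(z+i)+M(z)f_{a,\lambda}(z-i)$ is not by itself a contradiction: the escape is that this function vanishes identically, which happens precisely when $M(z)\equiv -f_{a,\lambda}(z+i)/f_{a,\lambda}(z-i)=-e^{-4iaz+2i\lambda}$. For $a>0$ this $M$ is non-constant, satisfies $|M(z)|=e^{4a\Im z}>1$ on the upper half-plane (so it is not in case (i) or (ii)), omits the unit circle off the real axis (so it survives your $e^{\lambda z}$ test), and annihilates exactly the test function you chose --- moreover every point $z_0$ of the upper half-plane leads to the \emph{same} pair $(a,\lambda)$, so varying $z_0$ does not help. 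You must separately kill this one-parameter family, which is what the paper does: having been forced into $M(z)=-e^{-4iaz+2bi}$, it feeds in the further test function $f(z)=\bigl(z^2-\tfrac{4}{e^{4a}-1}\bigr)e^{bz}\in\mathcal{L-P}$, checks that the resulting $g$ is not identically zero but satisfies $g(i)=0$, and concludes $a=0$, i.e.\ $M$ is a unimodular constant. Without some step of this kind your argument does not pin the non-constant case to (i). (Your closing remark locates the main difficulty in constructing $f_{a,\lambda}$; in fact that construction is the routine part, and the identically-zero loophole is where the real work lies.)
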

\begin{proof} Suppose that for every $f \in \mathcal{L-P}$ the function
 $f(z+i)+M(z)f(z-i)$ has only real roots or is identically zero, and let $M \not\equiv const$.
In contradiction to (i) assume that   there exists a point~$z_0$ with $\Im z_0 >0$ such that\footnote{The case   $\Im z_0 < 0$ and $|M(z_0)| \leqslant 1$ can be considered analogously.} $|M(z_0)| \geqslant 1$, and
denote $w_0 := M(z_0).$  Let  $z_0 = \alpha + i \beta ,$ $\alpha \in \mathbb{R},$ $\beta >0,$
and $ w_0 = R e^{i\theta}, $ $R\geqslant 1,  \theta \in \mathbb{R}.$

Consider the function $f(z) =e^{-a z^2 +bz}\in\mathcal{L-P}$ where $a=\frac{\log R}{4 \beta} \geqslant 0$
and $b= 2a \alpha +\theta/2\in \mathbb{R}$. It is easy to check that
 $\frac{f(z_0+i)}{f(z_0-i)} = w_0$, so the equation $f(z+i)+M(z)f(z-i)=0$
has a non-real root~$z_0$. Since the function $f(z+i)+M(z)f(z-i)$
cannot have nonreal zeros by assumption unless it is identically zero, we conclude
that
$$
e^{-a (z+i)^2 +b(z+i)} + M(z) e^{-a (z-i)^2 +b(z-i)}\equiv 0,
$$
so $M(z)=- e^{-4iaz +2bi}$.

Now to come to contradiction, it suffices to show that $a=0$. Suppose this is not true, so $a\neq 0$. Then for the function $f(z)= \left(z^2 - \frac{4}{e^{4a} -1}\right)e^{b z} \in \mathcal{L-P}$ we have that the function
$$
g(z): = f(z+i)+M(z)f(z-i)= \left((z+i)^2 - \frac{4}{e^{4a} -1}\right)e^{b (z+i)}  - e^{-4iaz +2bi}
\left((z-i)^2 - \frac{4}{e^{4a} -1}\right)e^{b (z-i)}
$$
is not identically zero, so it must have only real zeros by assumption. At the same time it is easy to check that $g(i)=0.$
The contradiction implies $a=0$, so $M$ is a constant function with $|M(z)| \equiv 1$. Therefore, if the function
$f(z+i)+M(z)f(z-i)$ belongs to the class $\mathcal{L-P}$ whenever $f\in\mathcal{L-P}$, then $(i)$ or $(ii)$ holds.

Assume now that the function $M(z)$ satisfies the conditions (i) or (ii). Fix a function
$f\in\mathcal{L-P}$, and suppose, on the contrary, that for some $z_0 \in \mathbb{C}$ the following
holds
\begin{equation}\label{th:mth1.proof.0}
f(z_0+i)+M(z_0)f(z_0-i)=0,
\end{equation}
or, equivalently,
$$
\dfrac{f(z_0+i)}{f(z_0-i)} = -M(z_0).
$$

Since the function $f$ can be represented in the form~\eqref{e2}, we have
$$  \left| \frac{f(z_0+i)}{f(z_0-i)}\right| = e^{4a \mbox{Im}\ z_0 } \left|  \frac{z_0 +i}{z_0 -i}\right|^n \prod_{k=1}^\infty
\left| \frac{x_k - z_0 -i}{x_k - z_0 +i} \right|.
$$
It is clear that for any $\lambda\in\mathbb{C}$ with $\Im\lambda>0$
\begin{equation}\label{th:mth1.proof.1}
 \left| \frac{f(\lambda+i)}{f(\lambda-i)}\right| \geqslant 1,
\end{equation}
where the inequality is strict if $f$ has at least one zero.

Analogously, for $\Im\lambda<0$
\begin{equation}\label{th:mth1.proof.2}
 \left| \frac{f(\lambda+i)}{f(\lambda-i)}\right| \leqslant 1,
\end{equation}
where the inequality is strict if $f$ has at least one zero.

If (i) holds, then inequalities~\eqref{th:mth1.proof.1}--\eqref{th:mth1.proof.2} show that
equality~\eqref{th:mth1.proof.0} is impossible unless $z_0 \in \mathbb{R}$, a contradiction.

Suppose now that (ii) holds, and $M(z) \equiv e^{ic}$, $c\in \mathbb{R}$.
If $f(z) = e^{- a z^2 +b z},$ $a\geqslant0$, $b\in \mathbb{R}$, then the function
$$
f(z+i)+M(z)f(z-i)=
e^{-a(z+i)^2 +b(z+i)} \left(1 + e^{i(c+ 4az -2b)}  \right)
$$
has only real zeros or is identically zero.

If $f$ has at least one zero, then by~\eqref{th:mth1.proof.1}--\eqref{th:mth1.proof.2}, the
equality~\eqref{th:mth1.proof.0} implies $z_0 \in \mathbb{R}$. Thus, any zero of
$f(z+i)+M(z)f(z-i)$ is real unless this function is identically zero.
\end{proof}

The following theorem provides necessary and sufficient conditions on the functions
$M_1$ and $M_2$ for the operator $T_{M_1,M_2}$ to preserve the class $\mathcal{L-P}$.

\begin{theorem}\label{th:mth3}
Let $M_1$ and $M_2$ be entire functions not identically zero. Then for every function $f\in \mathcal{L-P}$,
$$
M_1(z)f(z+i)+M_2(z)f(z-i) \in \mathcal{L-P}
$$
if, and only if, $M_1$ and $M_2$ satisfy the conditions:
\begin{itemize}
\item[1)] $M_1(z)=\overline{M_2(\bar{z})}$;
\item[2)] $ \left|\dfrac{M_2(z)}{M_1(z)}\right| < 1$ for every $z$ with $ \mbox{Im}\ z > 0 , $ or
 $\dfrac{M_2(z)}{M_1(z)}$ is a constant function with $ \left|\dfrac{M_2(z)}{M_1(z)}\right| \equiv 1$;
\item[3)] The function $M_2$  is of the form
\end{itemize}
\begin{equation}\label{e4}
M_2(z)=
C z^{n} e^{- a z^2 +b z}
\prod_{k=1}^\infty\left(1-\dfrac{z}{\alpha_k}\right)e^{\tfrac{z}{\alpha_k}},
\end{equation}
where $C, b \in \mathbb{C}, $ $\Im b \geqslant 0$, $n \in \mathbb{N} \cup \{0\}$,
$a \geqslant 0$, $\alpha_k \neq 0$, $\Im\alpha_k \geqslant 0$,
 and $\sum\limits_{k=1}^\infty \dfrac{1}{|\alpha_k|^2} < \infty$.
\end{theorem}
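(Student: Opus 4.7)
The plan is to prove both directions of the equivalence separately. For necessity, I would first insert simple test functions from $\mathcal{L-P}$ to extract condition 1). Applying $T_{M_1,M_2}$ to $f\equiv 1$ yields $M_1+M_2\in\mathcal{L-P}$, while applying it to $f(z)=e^{(\pi/2)z}$ yields $e^{(\pi/2)z}\cdot i(M_1-M_2)\in\mathcal{L-P}$; division by the $\mathcal{L-P}$-factor $e^{(\pi/2)z}$ (whose exponent is real) preserves the class, so $i(M_1-M_2)\in\mathcal{L-P}$. Both functions being real on $\mathbb{R}$ forces $\Re M_1(x)=\Re M_2(x)$ and $\Im M_1(x)=-\Im M_2(x)$, hence $M_1(x)=\overline{M_2(x)}$ on $\mathbb{R}$; analytic continuation then gives $M_1(z)=\overline{M_2(\bar z)}$, which is condition 1). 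The identity $M_2=\tfrac{1}{2}(M_1+M_2)+\tfrac{i}{2}\cdot i(M_1-M_2)$ also exhibits $M_2$ as a linear combination of two $\mathcal{L-P}$ functions, so both $M_1$ and $M_2$ are entire of order at most $2$. Condition 2) then follows by applying Theorem~\ref{th:mth1} to the meromorphic function $M:=M_2/M_1$, using the factorization $T_{M_1,M_2}(f)=M_1\cdot(f(z+i)+M\,f(z-i))$: wherever $M_1\neq 0$, the image $T_{M_1,M_2}(f)$ having only real zeros for every $f\in\mathcal{L-P}$ precisely means that $f(z+i)+M\,f(z-i)$ has only real zeros or is identically zero.

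With 1) and 2) in hand, Hadamard factorization of the order-$2$ function $M_2$ reads
\begin{equation*}
M_2(z)=C\,z^{n}\,e^{Q_2z^2+Q_1z}\prod_{k=1}^{\infty}\left(1-\frac{z}{\alpha_k}\right)e^{z/\alpha_k},\qquad\sum_{k}|\alpha_k|^{-2}<\infty,
\end{equation*}
with \textit{a priori} complex $Q_1,Q_2$. Condition 2) rules out zeros of $M_2$ in the open lower half-plane: such a zero would place a zero of $M_1=M_2^*$ in the open upper half-plane, hence a pole of $M_2/M_1$ there, contradicting $|M_2/M_1|<1$. Therefore $\Im\alpha_k\geqslant 0$. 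To identify $Q_1,Q_2$, I would evaluate $|M_2(z)/M_1(z)|$ along specific rays in the upper half-plane. On $z=iy$ with $y\to+\infty$, the quadratic exponential contribution to $|M_2/M_1|$ reduces to $1$ (because $\Im(iy)^2=0$), while the linear contribution equals $e^{-2y\,\Im Q_1}$, so $|M_2/M_1|\leqslant 1$ forces $\Im Q_1\geqslant 0$, that is $\Im b\geqslant 0$. On the rays $z=re^{\pm i\pi/4}$, the $r^2$-contribution $e^{\mp 2r^2\Im Q_2}$ has opposite signs, forcing $\Im Q_2=0$ and thus $Q_2\in\mathbb{R}$. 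The remaining inequality $Q_2\leqslant 0$ (i.e., $a\geqslant 0$) is extracted from $M_1+M_2\in\mathcal{L-P}$ by writing $M_2=\tfrac{1}{2}(A+iB)$ with $A=M_1+M_2$ and $B=i(M_1-M_2)$ both in $\mathcal{L-P}$, and comparing the order-$2$ indicator of $M_2$ at $\theta=0$ with those of $A$ and $B$, both of which are non-positive by the $\mathcal{L-P}$ form.

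For the sufficiency direction, assume 1)--3) and fix $f\in\mathcal{L-P}$. Setting $g(z):=M_1(z)f(z+i)$, the identity $\overline{g(\bar z)}=M_2(z)f(z-i)$ (using 1) and $\overline{f(\bar z)}=f(z)$) gives $T_{M_1,M_2}(f)=g(z)+\overline{g(\bar z)}$. All zeros of $g$ lie in the closed lower half-plane: those of $M_1=M_2^*$ as the conjugates $\bar\alpha_k$ of the upper-half-plane numbers $\alpha_k$, and those of $f(z+i)$ as $x_k-i$ for real $x_k$. Moreover, $|\overline{g(\bar z)}/g(z)|=|M_2/M_1|\cdot|f(z-i)/f(z+i)|\leqslant 1$ in the open upper half-plane by condition 2) and by the modulus estimate for $\mathcal{L-P}$ functions already used in the proof of Theorem~\ref{th:mth1}. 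Thus $g$ is a Hermite--Biehler entire function of order at most $2$, and the classical Hermite--Biehler theorem yields $g+\overline{g(\bar z)}\in\mathcal{L-P}$. The main obstacle I expect is the real-axis growth analysis producing $a\geqslant 0$: one must rigorously control possible cancellations in $M_1(x)+M_2(x)=2\,\Re M_2(x)$ and separate the $e^{Q_2 x^2}$ contribution from the canonical-product contribution, which is the same technical issue that forces the refined dichotomy~\eqref{q1}--\eqref{q2} in the full Theorem~\ref{th:mth2}.
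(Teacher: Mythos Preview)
Your overall strategy is sound, and several pieces---deriving condition~1) from the two test functions $f\equiv1$ and $f(z)=e^{(\pi/2)z}$, invoking Theorem~\ref{th:mth1} for condition~2), and the Hermite--Biehler argument for sufficiency---are correct and in places tidier than the paper's route (the paper uses $f\equiv1$ and $f(z)=z$ for condition~1), and proves sufficiency directly from Theorem~\ref{th:mth1} plus the growth bound rather than by assembling $g(z)=M_1(z)f(z+i)$ into an $\overline{\mathcal{HB}}$ function).

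There is, however, a genuine gap in your derivation of condition~3). You write the Hadamard factorization of $M_2$ in the genus-one form with $\sum_k|\alpha_k|^{-2}<\infty$, but for an entire function of order~$2$, even of normal type, the genus can be~$2$: the convergence $\sum_k|\alpha_k|^{-2}<\infty$ is precisely what has to be \emph{proved}. Your observation that $M_2=\tfrac12(M_1+M_2)+\tfrac{i}{2}\cdot i(M_1-M_2)$ is a linear combination of two $\mathcal{L-P}$ functions controls the growth of $M_2$ but gives no information on the exponent of convergence of its (non-real) zeros. The paper spends the bulk of its proof on exactly this point: boundedness of $M_2/M_1$ in the upper half-plane yields the Blaschke condition $\sum_k \Im\alpha_k/(1+|\alpha_k|^2)<\infty$, which handles the zeros with $\Im\alpha_k$ bounded away from~$0$; the zeros close to the real line are then controlled via Lindel\"of's theorem on the boundedness of the partial sums $\bigl|\sum_{|\alpha_k|\le r}\alpha_k^{-2}\bigr|$ for functions of integral order and finite type, together with the already-established summability of the real zeros coming from $M_1+M_2\in\mathcal{L-P}$. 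Without this step your subsequent ray estimates for $\Im Q_2=0$ and $\Im Q_1\geqslant0$ are not justified either, since genus-two Weierstrass factors $e^{z^2/(2\alpha_k^2)}$ would themselves contribute order-$r^2$ terms along your rays. (A minor slip: the ray $re^{-i\pi/4}$ lies in the lower half-plane; you need $re^{i3\pi/4}$ to get the opposite sign. The paper's version of this step compares $|M_2/M_1|$ along $\Re z=\pm\Im z$ in the upper half-plane and uses that the quotient of the genus-one products has growth strictly below order~$2$.)
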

\begin{proof}
First, we suppose that for any $f\in \mathcal{L-P}$, the function $M_1(z)f(z+i)+M_2(z)f(z-i)$ belongs to
$ \mathcal{L-P}$. Then the function $f(z+i)+\dfrac{M_2(z)}{M_1(z)}f(z-i)$
has only real roots or is identically zero whenever $f\in \mathcal{L-P}$.
By Theorem~\ref{th:mth1}, either the function $M(z):=\dfrac{M_2(z)}{M_1(z)}$
satisfies $|M(z)|<1$ for $\Im z>0$ and $|M(z)|>1$ for $\Im z<0$, or $|M(z)|\equiv1$,
so the condition $2)$ holds. This implies
\begin{equation}\label{a1}
\left|\frac{M_2(x)}{M_1(x)}\right|=1 \quad\text{for}\quad x \in\mathbb{R},
\end{equation}
and by reflection principle, we have
\begin{equation}\label{a2}
M_2(\alpha)=0 \Longrightarrow M_1(\bar{\alpha})=0.
\end{equation}
Thus, all the common zeros of $M_1(z)$ and $M_2(z)$ are real, and
\begin{equation}\label{d4}
M_2(\alpha)=0 \Longrightarrow \Im\alpha\geqslant 0.
\end{equation}

Furthermore, since the function $f(z)\equiv1$ is in the class $\mathcal{L-P}$, we get $M_1(z)+M_2(z) \in \mathcal{L-P} $ by assumption. Consequently,
$M_1(x)+M_2(x) \in \mathbb{R}$ for $ x\in\mathbb{R}$, so $\Im M_1(x)=-\Im M_2(x)$ whenever $x\in \mathbb{R}$. Moreover,  the function $f(z)=z$ is in the class  $\mathcal{L-P}$ too, so $M_1(z)(z+i)+M_2(z)(z-i) \in \mathcal{L-P}$ by assumption. This implies that $i(M_1(x)-M_2(x)) \in \mathbb{R}$ for $x\in\mathbb{R}$,  or, equivalently,
$\Re M_1(x)=\Re M_2(x)$ for $x\in \mathbb{R}.$  Thus, $ M_1(x)=\overline {M_2(x)}$
for all $x  \in \mathbb{R}$ that implies $ M_1(z)=\overline{M_2(\bar{z})}$ whenever $z \in \mathbb{C}$. Thus, the condition $1)$ is true.

To prove the condition $3)$ we note first that by~\eqref{a2} all the common zeros of the functions $M_1(z)$ and $M_2(z)$ are real. Moreover, these zeros belong to the zero set of the function $M_1(z)+M_2(z)$ which is in the class $\mathcal{L-P}$ as we mentioned above. So we have
\begin{equation}\label{f20}
\sum_{\alpha \in \mathcal{Z}_M} \frac{1}{\alpha^2}<\infty,
\end{equation}
where $ \mathcal{Z}_M := \{\alpha\in\mathbb{R}\setminus\{0\} \  :\   M_1(\alpha)=0 \wedge M_2(\alpha) =0\}$.

Furthermore, since $ M_1(z)+M_2(z)\in \mathcal{L-P}$, and  $M_1(z)(z+i)+M_2(z)(z-i) \in \mathcal{L-P}$,
the functions~$M_1(z)$ and $M_2(z)$ are of growth order at most $2$ and of normal type. The function $\dfrac{M_2(z)}{M_1(z)}$ is a meromorphic function bounded in the upper half-plane, therefore the non-real zeros of $M_2(z)$ must satisfy the Blaschke condition
for the upper half-plane:
\begin{equation}
\label{f18}\sum_k \frac{\mbox{Im}\  \alpha_k}{|\alpha_k|^2+1}<\infty
\end{equation}
(see, for example, the end of (\cite[Chapter VI]{K1}) or (\cite[Chapter VII]{lev})).

Obviously, there exists $\delta >0$ such that
\begin{equation}\label{f19}
M_2(z) =0\ \wedge \ z \neq 0 \Longrightarrow z \notin \{ z\ : \  |\Im z| \leqslant \delta \wedge
 |\Re z| \leqslant \delta \}.
\end{equation}
Let us split all the non-real zeros of $M_2$ into two groups:
$$
\mathcal{Z}_1=\{\alpha_k:\ M_2(\alpha_k)=0\wedge\Im\alpha_k > \delta \} \quad
\text{and}\quad\mathcal{Z}_2=\{\alpha_k:\ M_2(\alpha_k)=0\wedge0 <\Im \alpha_k\leqslant \delta \},
$$
so we have
$$
\mathcal{Z} := \mathcal{Z}_M \cup \mathcal{Z}_1 \cup \mathcal{Z}_2,
$$
where $\mathcal{Z}$ is the set of all non-zero roots of $M_2$.
From the Blaschke condition~\eqref{f18} it follows that
\begin{equation}\label{f9}
\sum_{\alpha \in \mathcal{Z}_1}\frac{1}{|\alpha|^2}<\infty .
\end{equation}

Now let us note that according to Lindel\"of's Theorem (see e.g. \cite[Chapter
2]{GO}, \cite[ p. 22, Problem 3]{K}), for any entire function $F$ of integral
order $\rho \in \mathbb{N}$ and of normal
(finite) type, the sums
$$
|S(r)|:=\left |\sum_{\{z\,:\, f(z)=0,\, |z|\le r,\,
z\not=0\}}\frac{1}{z^\rho}\right|
$$
are bounded as $r\to \infty$. So for the function $M_2(z)$ whose order of growth
does not exceed $2$, normal type, there exists a constant $C > 0$ such that for
every $R>0$
$$
C \geqslant \left|\sum_ {\alpha \in \mathcal{Z},\ |\alpha|\leq R  } \frac{1}{\alpha^2}\right|\geqslant
\left|\sum_ {\alpha \in \mathcal{Z}_2,\ |\alpha|\leq R  } \frac{1}{\alpha^2}\right| -
\sum_ {\alpha \in \mathcal{Z}_M,\ |\alpha|\leq R  } \left|\frac{1}{\alpha^2}\right| -
\sum_ {\alpha \in \mathcal{Z}_1,\ |\alpha|\leq R  }\left| \frac{1}{\alpha^2}\right|.
$$
This inequality together with \eqref{f20} and \eqref{f9} provide the existence of a constant $K > 0$ such that
$$
K \geqslant \left|\sum_ {\alpha \in \mathcal{Z}_2,\ |\alpha|\leqslant R  } \frac{1}{\alpha^2}\right| \geqslant  \mbox{Re} \left(
\sum_ { \alpha \in \mathcal{Z}_2,\ |\alpha |\leqslant R } \frac{1}{\alpha ^2}\right)  =  \sum_ {\alpha \in \mathcal{Z}_2,\ |\alpha |\leqslant R }
\frac{(\mbox{Re}\  \alpha)^2-(\mbox{Im}\  \alpha)^2}{|\alpha|^4} \geqslant 0
$$
for every $R>0$ (we take into account \eqref{f19}). Thus, we have
$$
K \geq  \sum_ {\alpha \in \mathcal{Z}_2,\ |\alpha |\leq R }
\frac{(\mbox{Re}\  \alpha)^2-(\mbox{Im}\  \alpha)^2}{|\alpha|^4} \geq  \sum_ {\alpha \in \mathcal{Z}_2,\ |\alpha |\leq R }
\frac{|\alpha|^2-2\delta^2}{|\alpha|^4}
$$
Since the order of $M_2$ does not exceed $2$ we get $\sum\limits_{\alpha \in \mathcal{Z}_2}
\frac{1}{|\alpha|^4} < \infty$ that implies
\begin{equation}\label{f10}
\sum_{\alpha \in \mathcal{Z}_2}\frac{1}{|\alpha|^2}<\infty.
\end{equation}
Using (\ref{f20}),  (\ref{f9}) and (\ref{f10}) we have
$$
\sum_{\alpha \in \mathcal{Z}}\frac{1}{|\alpha|^2}<\infty.
$$
Thus, by the Hadamard Factorization Theorem (see \cite[Chapter II]{GO} or\cite[ p.22]{B})  the functions
$M_2$ can be represented in the form
\begin{equation}\label{f6}
M_2(z)= C z^{n} e^{a z^2+bz}\prod_{k=1}^\infty \left(1-\frac{z}{\alpha_k}\right) e^{\tfrac{z}{\alpha_k}},
\end{equation}
where $n\in \mathbb{N} \cup \{0\}$, $C, a, b\in \mathbb{C}$,  $ \alpha_k  \in \mathbb{C}\setminus \{ 0\}$,
$\Im\alpha_k \geq 0$,  and $\sum\limits_{k=1}^\infty \frac{1}{|\alpha_k|^2}<\infty$. Since
$ M_1(z)=\overline {M_2(\bar{z})} , \   z \in \mathbb{C}$ we have
\begin{equation}\label{f6_1}
M_1(z)= \overline{C} z^{n} e^{\bar{a} z^2+\bar{b}z}\prod_{k=1}^\infty \left(1-\frac{z}{\overline{\alpha_k}}\right)
e^{\tfrac{z}{\overline{\alpha_k}}}.
\end{equation}

Let $M(z)$ be an entire function such that
$$
M_2(z) =  e^{a z^2} M(z),\qquad M_1(z) =  e^{\bar{a} z^2}\ \overline{M(\bar{z})}.
$$
Then from the condition $2)$ established above, we have
\begin{equation}\label{f6_2}
\left|\frac{M_2(z)}{M_1(z)} \right|= e^{-4(\Im a)(\Re z)(\Im z)}
\left|\frac{M(z)}{\overline{M(\bar{z})}} \right| \leqslant 1,  \ \ \mbox{ when} \ \ \Im z>0.
\end{equation}
The formul\ae~\eqref{f6}--\eqref{f6_1} imply that the genus of the meromorphic function
$\dfrac{M(z)}{\overline{M(\bar{z})}}$ is at most $1$, so the the inequality~\eqref{f6_2} can be valid only if
\begin{equation}
\label{d1}
\mbox{Im} \ a=0.
\end{equation}
Indeed, if $\Im a>0$ ($\Im a<0$), then for $\Re z = \Im z$ (resp.  $\Re z = - \Im  z$),
$\left|\dfrac{M_2(z)}{M_1(z)} \right|>1$ whenever $|z|$ is sufficiently large. Hence,
$$
M_2(z)=e^{a z^2}M(z), \ M_1(z) = e^{a z^2}\ \overline{M(\bar{z})}, \  a\in\mathbb{R},
$$
where $M$ is an entire functions whose growth doesn't exceed the order 2, minimal type.
Since  $1 \in\mathcal{L-P},$  we have
$$
M_1(z)+M_2(z)=e^{a z^2}(M(z)+\overline{M(\bar{z})}) \in \mathcal{L-P},
$$
where the
growth of $M(z)+\overline{M(\bar{z})}$ doesn't exceed the order 2 and is of  minimal type. Using \eqref{e2}
we conclude that this can be possible only if $a\leqslant 0$.
Thus, if the operator~\eqref{e3} preserve the class $\mathcal{L-P}$, the conditions
$1)$--$3)$ hold for its coefficients $M_1(z)$ and $M_2(z)$.

Conversely, suppose that the conditions
$1)$--$3)$ hold for the operator~\eqref{e3}. Consider any entire function $f\in \mathcal{L-P}$. The condition  $2)$
provides all the assumptions of Theorem \ref{th:mth1} for the function~$\dfrac{M_2}{M_1}$. Therefore, the function
$f(z+i)+\dfrac{M_2(z)}{M_1(z)}f(z-i)$ has only real roots or is identically zero.
The condition~$1)$ implies that the combination $M_1(x)f(x+i)+M_2(x)f(x-i)$ is real for all $x\in \mathbb{R}$.
Now the condition $3)$ gives us the necessary estimate on the growth of
this combination. Thus,
$$
M_1(z)f(z+i)+M_2(z)f(z-i)\in\mathcal{L-P},
$$
as required.
\end{proof}

\setcounter{equation}{0}

\section{Proof of Theorem \ref{th:mth2} }\label{section:proof.thm1.2}

Following notations used in~\cite{lev}, by $\mathcal{HB}$  we denote the class of all entire functions $M(z)$ with no roots in
the closed lower half-plane $\Im z\leqslant 0$ satisfying the condition
$$
\left| \frac{M(z)}{\overline{M(\bar{z})}} \right|< 1,
\quad \Im z>0,
$$
and by $\overline{\mathcal{HB}}$  we denote the class of all entire functions $M(z)$ not vanishing
in the open lower half-plane $\Im z<0$ satisfying
the condition
$$
\left| \frac{M(z)}{\overline{M(\bar{z})}} \right|\leqslant 1, \quad \Im z>0.
$$
Obviously, the equality is possible only if $M(z)=CF(z),$ where $C$ is a non-zero complex constant and $F(z)$ is a real function.

We need the notation: if $f(z)=\sum\limits_{k=0}^{\infty} (a_k+ib_k)z^k,$ where $a_k$ and $b_k$ are real numbers,
then
$$
Rf(z)=\sum\limits_{k=0}^{\infty} a_k z^k\qquad\text{and}\qquad If(z)=\sum\limits_{k=0}^{\infty} b_k z^k.
$$

According to  M.\,Krein's theorem (see~\cite{Akhiezer_Krein} and \cite[p. 411]{lev}), an entire function $\Phi$ belongs to the class
$\mathcal{HB}$ if, and only if, it can be represented in the form
\begin{equation}\label{t5}
\Phi (z)= C z^{n} e^{u(z)+i\beta z}\prod_{k=1}^\infty \left(1-\frac{z}{\alpha_k}\right)
e^{RP_k \left(  \frac{z}{\alpha_k} \right) },
\end{equation}
where $n\in\mathbb{N}\cup\{0\}$, $ \beta \geqslant 0$, $u(z)$ is a real entire function, and $\displaystyle\prod\limits_{k=1}^\infty \left(1-\dfrac{z}{\alpha_k}\right)
e^{P_k \left(  \frac{z}{\alpha_k} \right) }$ is the Hadamard product relating to zeros of the function $\Phi (z)$ (see, e.g.~\cite[Section 2.7]{B},~\cite[Chapter I, Section 10] {lev}, or \cite[Chapter II]{GO}).

\begin{proof}[Proof of Theorem \ref{th:mth2}] The first assertion of Theorem \ref{th:mth2} coincides with the
first one in Theorem \ref{th:mth3}.  So it suffices to prove equivalence of the second assertion of Theorem \ref{th:mth2}
to the second and third assertions of Theorem \ref{th:mth3}.

Obviously, $\dfrac{|M_2(z)|}{|M_1(z)|}=\dfrac{|M_2(z)|}{|\overline{M_2(\bar{z})}|}=1$ if, and only if,
$M_2(z)=C\Phi(z),$ where $C$ is a non-zero complex constant and $\Phi(z)$ is a real entire function.  In this case,
the statement of Theorems \ref{th:mth2} and \ref{th:mth3} are equivalent.

\vspace{1mm}

Now assume that $\left|\dfrac{M_2(z)}{M_1(z)}\right|=\left|\dfrac{M_2(z)}{\overline{M_2(\bar{z})}}\right|< 1$,
$\Im z>0$, that is $M_2(z) \in \overline{\mathcal{HB}}$. All common zeros of the functions $M_2(z) $
and $\overline{M_2(\bar{z})}$ are real. Denote by $(x_k)_{k=1}^{+\infty}$ all real zeros of $M_2(z)$.
Then
\begin{equation}
\label{t6} M_2(z)=\Psi(z)\prod_{k=1}^{\infty} \left(1-\dfrac{z}{x_k}\right)e^{\tfrac{z}{x_k}},
\end{equation}
where $\Psi(z)\in \mathcal{HB}$.  Conversely, if the function $M_2$ is of the form~\eqref {t6}, then
$M_2(z)\in \overline{\mathcal{HB}}$.  By virtue of Theorem \ref{th:mth2} the function $\Psi$  is of the form
$$
\Psi(z)=C z^{n} e^{-az^2+bz}\cdot\prod_{k=1}^{\infty} \left(1-\dfrac{z}{\alpha_k}\right)e^{\tfrac{z}{\alpha_k}},
$$
where $\Im\alpha_k >0$,   $\sum\limits_{k=1}^{\infty}|\alpha_k|^{-2} <\infty$, $ a \geqslant 0$, $C,b\in\mathbb{C}$,  $n\in\mathbb{N}\cup\{0\}$.

Taking into account the Blaschke condition \eqref{f18} we can rewrite this formula in the following way
$$
\Psi(z)=C z^{n} e^{-az^2+\alpha z+iz\left(\beta-\sum\limits_{k=1}^{\infty}\tfrac{\Im\alpha_k}{|\alpha_k|^2}\right)}\cdot\prod_{k=1}^{\infty} \left(1-\frac{z}{\alpha_k}
\right)e^{z \Re\tfrac{1}{\alpha_k}},
$$
where $b=\alpha+i\beta$, see~\eqref{t4}.

By M.\,Krein's theorem mentioned above (see~\eqref{t5}), we have $\Psi(z)\in \mathcal{HB} \Longleftrightarrow$
\begin{equation*}
\Im b -\sum_{k=1}^{\infty} \frac{\mbox{Im}\
\alpha_k}{|\alpha_k|^2} \geqslant 0.
\end{equation*}

If, additionally, $\sum\limits_{k=1}^{\infty}|\alpha_k|^{-1}<\infty$, then
the function $\Psi(z)$ is of genus $0$, so it has the form (see e.g.~\cite[p.???]{lev})
$$
\Psi(z)=C z^{n} e^{-az^2+b z}\prod_{k=1}^{\infty} \left(1-\frac{z}{\alpha_k}
\right),
$$
and from~\eqref{t5} we have
$$
\Im b\geqslant 0,
$$
as required.
\end{proof}

\section{Conclusion}\label{Section:conclusion}

We established necessary and sufficient conditions for the operator~\eqref{e1}
to preserve the Laguerre-P\'olya class $\mathcal{L-P}$ of entire functions. This
can help to define whether a given entire function belongs to $\mathcal{L-P}$
provided we established that this function is a result of the action of
the operator~\eqref{e1} to a function in the class $\mathcal{L-P}$. Proposition~\ref{Proposition1} follows from Theorem~\ref{th:mth2} applied to polynomials.

\section{Acknowledgements}

The second author is Shanghai Oriental Scholar whose research was supported by Russian Science Foundation, grant no. 14--11--00022.

\end{document}